\theoremstyle{plain}
\newtheorem{theorem}{Theorem}[section]
\newtheorem{lemma}[theorem]{Lemma}
\newtheorem{proposition}[theorem]{Proposition}
\newtheorem{corollary}[theorem]{Corollary}
\newtheorem*{maintheorem}{Main Theorem}
\theoremstyle{remark}
\newcommand{\C}{\ensuremath{\mathbb{C}}}
\newcommand{\E}{\ensuremath{\mathbb{E}}}
\newcommand{\R}{\ensuremath{\mathbb{R}}}
\newcommand{\g}[1]{\ensuremath{\mathfrak{#1}}}
\DeclareMathOperator{\ad}{ad}
\DeclareMathOperator{\Ad}{Ad}
\DeclareMathOperator{\Exp}{Exp}
\begin{document}
\title[Polar actions on the complex hyperbolic plane]{Polar actions on the complex
hyperbolic plane}


\author[J.~Berndt]{J\"{u}rgen Berndt}
\address{Department of Mathematics\\
         King's College London\\
         United Kingdom}
\email[J\"{u}rgen Berndt]{jurgen.berndt@kcl.ac.uk}

\author[J.\,C.~D\'{\i}az-Ramos]{Jos\'{e}~Carlos D\'{\i}az-Ramos}
\address{Department of Geometry and Topology\\
         University of Santiago de Compostela\\
         Spain}
\email[Jos\'{e}~Carlos D\'{\i}az-Ramos]{josecarlos.diaz@usc.es}
\thanks{The second author has been supported by a Marie-Curie
        European Reintegration Grant (PERG04-GA-2008-239162)
        and projects MTM2009-07756
        and INCITE09207151PR (Spain)}

\subjclass[2010]{53C35, 57S20, 53C40}

\keywords{Polar actions, complex hyperbolic plane}

\begin{abstract}
We classify the polar actions on the complex hyperbolic
plane $\C H^2$ up to orbit equivalence. Apart from the trivial and transitive polar actions, there are five polar actions of cohomogeneity one and four polar actions of cohomogeneity two.
\end{abstract}

\maketitle

\section{Introduction}

Let $M$ be a Riemannian manifold and denote by $I(M)$ its isometry
group. A connected closed subgroup $G$ of $I(M)$ is said to act
polarly on $M$ if there exists a connected closed submanifold
$\Sigma$ of $M$ that intersects all the orbits of $G$
orthogonally. Thus, for each $p \in M$ the intersection of
$\Sigma$ and the orbit $G \cdot p$ of $G$ containing $p$ is
nonempty, and for all $p \in \Sigma$ the tangent space
$T_p\Sigma$ of $\Sigma$ at $p$ is contained in the normal
space $\nu_p(G\cdot p)$ of $G \cdot p$ at $p$. The submanifold
$\Sigma$ is called a section of the action.

Polar actions on Riemannian symmetric spaces of compact type are
understood reasonably well, see \cite{Ko07}, \cite{Ko09} and
\cite{PT99} for more details. On the other hand, due to the possible
noncompactness of the groups, polar actions on Riemannian symmetric
spaces of noncompact type are not understood except for the real
hyperbolic spaces. The purpose of this paper is to classify the polar
actions on the complex hyperbolic plane $\C H^2$ up to orbit
equivalence. This is the first complete such classification on a
nontrivial Riemannian symmetric space of noncompact type. We hope
that this investigation will provide further insight into the
structure theory of polar actions.

The complex hyperbolic plane is a Riemannian symmetric space of noncompact type, namely $\C H^2 = G/K$ with $G = SU(1,2)$ and $K = S(U(1)U(2))$. Denote by $o \in \C H^2$ the unique fixed point of the $K$-action on $\C H^2$ and by $\g{g} = \g{k} \oplus \g{p}$ the corresponding Cartan decomposition of the Lie algebra $\g{g}$ of $G$. Denote by $\theta \in {\rm Aut}(\g{g})$ the corresponding Cartan involution. Let $\g{a}$ be a maximal abelian subspace of $\g{p}$ and
$\g{g} = \g{g}_{-2\alpha}\oplus\g{g}_{-\alpha}\oplus\g{g}_0\oplus
\g{g}_\alpha\oplus\g{g}_{2\alpha}$
the corresponding restricted root space decomposition of $\g{g}$. The root space $\g{g}_0$ decomposes into $\g{g}_0 = \g{k}_0 \oplus \g{a}$, where $\g{k}_0$ is the centralizer of $\g{k}$ in $\g{a}$. The complex structure on $\C H^2$ leaves the root space $\g{g}_\alpha$ invariant, and therefore $\g{g}_\alpha \cong \C$. By $\g{g}_\alpha^{\R}$ we denote a real form of $\g{g}_\alpha$, that is, a real one-dimensional linear subspace of $\g{g}_\alpha$.

The subalgebra $\g{n} = \g{g}_\alpha \oplus \g{g}_{2\alpha}$ is nilpotent and the action of the connected closed subgroup $N$ of $G$ with Lie algebra $\g{n}$ on $\C H^2$ induces a foliation of $\C H^2$ by horospheres. On a horosphere there are two distinguished types of horocycles, those which are generated by a real form $\g{g}_\alpha^{\R}$ and those which are generated by $\g{g}_{2\alpha}$. In the first case the horocycle lies in a totally geodesic real hyperbolic plane $\R H^2 \subset \C H^2$, and in the second case the horocycle lies in a totally geodesic complex hyperbolic line $\C H^1 \subset \C H^2$. We call such horocycles real and complex, respectively.  The subalgebra $\g{n}$ is isomorphic to the Heisenberg algebra, and every horosphere in $\C H^2$ with the induced metric is isometric to the $3$-dimensional Heisenberg group with a suitable left-invariant Riemannian metric. The subalgebra  $\g{g}_\alpha^{\R} \oplus\g{g}_{2\alpha}$ of $\g{n}$ is abelian and the orbit through $o$ of the corresponding connected closed subgroup of $N$ is a Euclidean plane $\E^2$ embedded in the horosphere as a non-totally geodesic minimal surface.

\begin{maintheorem}
For each of the subalgebras $\g{h}$ of $\g{s}\g{u}(1,2)$ listed below the connected closed subgroup $H$ of $SU(1,2)$ with Lie algebra $\g{h}$ acts polarly on $\C H^2$:
\begin{enumerate}[{\rm(i)}]
\item Actions of cohomogeneity one - the section $\Sigma$ is a totally geodesic real hyperbolic line $\R H^1 \subset \C H^2$:\label{It:cohom1}
\begin{enumerate}[{\rm(a)}]
\item $\g{h} = \g{k} = \g{s}(\g{u}(1)\oplus\g{u}(2))\cong\g{u}(2)$; the orbits are $\{o\}$ and the distance spheres centered at $o$;

\item $\g{h} = \g{g}_{-2\alpha} \oplus\g{g}_0 \oplus\g{g}_{2\alpha} = \g{s}(\g{u}(1,1) \oplus \g{u}(1)) \cong \g{u}(1,1)$; the orbits are a totally geodesic complex hyperbolic line $\C H^1 \subset \C H^2$ and the tubes around $\C H^1$;

\item $\g{h} = \theta(\g{g}_\alpha^{\R}) \oplus \g{a} \oplus \g{g}_\alpha^{\R} \cong \g{so}(1,2)$; the orbits are a totally geodesic real hyperbolic plane $\R H^2 \subset \C H^2$ and the tubes around $\R H^2$;

\item $\g{h} = \g{k}_0 \oplus \g{g}_\alpha\oplus\g{g}_{2\alpha}$ or $\g{h} = \g{g}_\alpha\oplus\g{g}_{2\alpha}$; the orbits form a foliation of $\C H^2$ by horospheres;

\item $\g{h}=\g{a}\oplus \g{g}_\alpha^{\R}
    \oplus\g{g}_{2\alpha}$; the orbits form a foliation of
    $\C H^2$; one of its leaves is the minimal ruled real
    hypersurface of $\C H^2$ generated by a real horocycle in
    $\C H^2$, and the other leaves are the equidistant
    hypersurfaces.
\end{enumerate}

\item Actions of cohomogeneity two - the section $\Sigma$ is a totally geodesic real hyperbolic plane $\R H^2 \subset \C H^2$:\label{It:cohom2}
\begin{enumerate}[{\rm(a)}]
\item $\g{h} = \g{k} \cap (\g{g}_{-2\alpha} \oplus\g{g}_0 \oplus\g{g}_{2\alpha}) = \g{s}(\g{u}(1)\oplus\g{u}(1)\oplus\g{u}(1)) \cong \g{u}(1) \oplus \g{u}(1)$; the orbits are obtained by intersecting the orbits of the two cohomogeneity one actions \textup{(a)} and \textup{(b)} in \textup{(i)}: the action has one fixed point $o$, and on each distance sphere centered at $o$ the orbits are two circles  as singular orbits and $2$-dimensional tori as principal orbits;\label{It:cohom2:point}

\item $\g{h} = \g{g}_0$; the action leaves a totally geodesic $\C H^1 \subset \C H^2$ invariant. On this $\C H^1$ the action induces a foliation by a totally geodesic real hyperbolic line $\R H^1 \subset \C H^1$ and its equidistant curves in $\C H^1$. The other orbits are $2$-dimensional cylinders whose axis is one of the curves in that $\C H^1$; \label{It:cohom2:a}

\item $\g{h} = \g{k}_0\oplus\g{g}_{2\alpha}$; the orbits are obtained by intersecting the orbits of the two cohomogeneity one actions \textup{(b)} and \textup{(d)} in \textup{(i)}: the action leaves a horosphere foliation invariant, and on each horosphere the orbits consist of a complex horocycle and the tubes around it; \label{It:cohom2:g2a}

\item $\g{h} = \g{g}_\alpha^{\R} \oplus\g{g}_{2\alpha}$; the orbits are obtained by intersecting the orbits of the two cohomogeneity one actions \textup{(d)} and \textup{(e)} in \textup{(i)}: the action leaves a horosphere foliation invariant, and on each horosphere the action induces a foliation for which the minimally embedded Euclidean plane $\E^2$ and its equidistant surfaces are the leaves. \label{It:cohom2:foliation}
\end{enumerate}
\end{enumerate}
Every polar action on $\C H^2$ is either trivial, transitive, or orbit equivalent to one of the polar actions described above.
\end{maintheorem}

The paper is organized as follows. In Section 2 we summarize some
basic material, and in Section 3 we present the proof of the Main
Theorem. The only two interesting cases arise for cohomogeneity one
and cohomogeneity two. The cohomogeneity one case was settled in
\cite{BT06}, and the cohomogeneity two case for actions without
singular orbits in \cite{BD}. The main contribution of this paper to
the classification is the analysis of the cohomogeneity two case with
singular orbits.

\section{Preliminaries}\label{S:preliminaries}

We refer to~\cite{BTV95} for more information. We denote by $\C
H^2=SU(1,2)/S(U(1)U(2))$ the complex hyperbolic plane with constant
holomorphic sectional curvature~$-1$. Define $G=SU(1,2)$ and denote by
$K\cong S(U(1)U(2))$ the isotropy group of $G$ at some point $o\in\C H^2$. The Cartan
decomposition of $\g{g}$ with respect to $o$ is
$\g{g}=\g{k}\oplus\g{p}$, where $\g{g}$ and $\g{k}$ are the Lie
algebras of $G$ and $K$ respectively, and $\g{p}$ is the orthogonal
complement of $\g{k}$ in $\g{g}$ with respect to the Killing form
$B$ of $\g{g}$. Let $\theta$ be the corresponding Cartan involution. Then
$\langle X,Y\rangle=-B(\theta X,Y)$ defines a positive definite inner
product on $\g{g}$ that satisfies $\langle \ad(X)Y,Z\rangle=-\langle
Y,\ad(\theta X)Z\rangle$ for all $X$, $Y$, $Z\in\g{g}$. As usual,
$\ad$ and $\Ad$ will denote the adjoint maps of $\g{g}$ and $G$,
respectively. It is customary to identify $\g{p}$ with the tangent
space $T_o\C H^2$.

A maximal abelian subspace $\g{a}$ of $\g{p}$ is $1$-dimensional and
induces a restricted root space decomposition
$\g{g}=\g{g}_{-2\alpha}\oplus\g{g}_{-\alpha}\oplus\g{g}_0\oplus
\g{g}_\alpha\oplus\g{g}_{2\alpha}$, where
$\g{g}_\lambda=\{X\in\g{g}:\ad(H)X=\lambda(H)X\text{ for all
}H\in\g{a}\}$. Recall that
$[\g{g}_\lambda,\g{g}_\mu]=\g{g}_{\lambda+\mu}$,
$\theta\g{g}_\lambda=\g{g}_{-\lambda}$, and
$\g{g}_0=\g{k}_0\oplus\g{a}$, where $\g{k}_0=\g{g}_0\cap\g{k}$. Note
that $\g{k}_0$ is isomorphic to $\g{u}(1)$ and
$\g{g}_{2\alpha}$ is $1$-dimensional. Let
$\g{n}=\g{g}_\alpha\oplus\g{g}_{2\alpha}$, which is a nilpotent
subalgebra of $\g{g}$ isomorphic to the $3$-dimensional Heisenberg algebra. Then
$\g{g}=\g{k}\oplus\g{a}\oplus\g{n}$ is an Iwasawa decomposition of
$\g{g}$ and the connected subgroup $AN$ of $G$ whose Lie algebra is
$\g{a}\oplus\g{n}$ acts simply transitively on $\C H^2$. We endow
$AN$, and hence $\g{a}\oplus\g{n}$, with the left-invariant metric
$\langle\,\cdot\,,\,\cdot\,\rangle_{AN}$, and the complex structure
$J$ that make $\C H^2$ and $AN$ isometric. This implies that
$\langle{X},{Y}\rangle_{AN} =\langle X_{\g{a}},Y_{\g{a}}\rangle
+\frac{1}{2}\langle X_{\g{n}},Y_{\g{n}}\rangle$ for $X,Y\in\g{a}\oplus\g{n}\cong T_{1}AN$,
where the subscript means orthogonal
projection. The complex structure $J$ on $\g{a}\oplus\g{n}$ satisfies
that $J\g{g}_\alpha = \g{g}_\alpha$  and $J\g{a}=\g{g}_{2\alpha}$.
Let $B$ be a unit vector in $\g{a}$ and define
$Z=JB\in\g{g}_{2\alpha}$. Note that $\langle B,B\rangle=\langle
B,B\rangle_{AN}=1$, whereas $\langle Z,Z\rangle=2\langle
Z,Z\rangle_{AN}=2$. Then
\[
[aB+U+xZ,bB+V+yZ]=
-\frac{b}{2}U+\frac{a}{2}V
+\left(-bx+ay+\frac{1}{2}\langle JU,V\rangle\right)Z,
\]
where $a$, $b$, $x$, $y\in\R$, and $U$, $V\in\g{g}_\alpha$.
Finally, we define
$\g{p}_\lambda=(1-\theta)\g{g}_\lambda\subset\g{p}$. Then,
$\g{p}=\g{a}\oplus\g{p}_\alpha\oplus\g{p}_{2\alpha}$, $\g{p}_\alpha$
is complex, and $\g{p}_{2\alpha}$ is one-dimensional. If $i$ denotes
the complex structure of $\g{p}$, we have
$iB=\frac{1}{2}(1-\theta)Z$, and $i(1-\theta)U=(1-\theta)JU$.

\section{Proof of the Main Theorem}\label{S:proof}

For a Riemannian manifold $M$ we denote by $I(M)$ the isometry group of $M$ and by $T_pM$ the tangent space of $M$ at $p \in M$. If $\Sigma$ is a submanifold of $M$, we denote by $\nu_p\Sigma$ the normal space of $\Sigma$ at $p \in \Sigma$. For a subgroup $H \subset I(M)$ we denote by $H \cdot p$ the orbit of the $H$-action on $M$ containing $p$. We first recall a result from \cite{DK11}.

\begin{proposition}\label{P:criterion}
Let $M$ be a complete connected Riemannian manifold and $\Sigma$ be  a
connected totally geodesic embedded submanifold of $M$. A closed subgroup
$H$ of $I(M)$ acts polarly on $M$ with section $\Sigma$
if and only if there exists a point $o\in M$ such that
\begin{enumerate}[{\rm(a)}]
\item $T_o\Sigma\subset\nu_o(H\cdot
    o)$, \label{P:criterion:tangent}

\item the slice representation of $H_o$ on $\nu_o(H\cdot o)$ is
    polar and $T_o\Sigma$ is a section,\label{P:criterion:slice}

\item $\nabla_v X^*\in\nu_o\Sigma$ for all $v\in T_o\Sigma$ and
    all $X\in\g{h}$, where $X^*$ denotes the smooth vector field
    on $M$ defined by $X^*_p=\frac{d}{dt}_{\vert t=0}\Exp(tX)(p)$
    for each $p\in M$.\label{P:criterion:derivative}
\end{enumerate}
\end{proposition}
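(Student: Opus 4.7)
The plan is to prove necessity and sufficiency separately. For \emph{necessity}, assuming $H$ is polar with section $\Sigma$, each condition reduces to a standard fact about polar actions: (a) is immediate from orthogonality of $\Sigma$ with the $H$-orbits at $o$; (b) is the classical theorem that the slice representation of a polar action is polar, with $T_o\Sigma$ a section, which follows because the normal Riemannian exponential map at $o$ intertwines the linear $H_o$-action on $\nu_o(H\cdot o)$ with the $H_o$-action on a geodesic slice, together with the fact that total geodesicity gives $\exp_o(T_o\Sigma)\subset\Sigma$; (c) comes from differentiating the identity $\langle X^*_{\gamma(t)}, W(t)\rangle\equiv 0$ along a geodesic $\gamma\subset\Sigma$ with $W$ parallel in $T\Sigma$, noting that $W(t)$ remains in $T\Sigma$ by total geodesicity.

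For \emph{sufficiency}, assume (a), (b), (c) at $o$. I would aim to establish (1) every $H$-orbit meets $\Sigma$, and (2) $T_p\Sigma\subset\nu_p(H\cdot p)$ at every $p\in\Sigma$. For (1), the slice theorem realizes a tubular neighborhood of $H\cdot o$ as $H\times_{H_o}B$ with $B\subset\nu_o(H\cdot o)$ an open ball. Condition (b) yields $H_o\cdot(B\cap T_o\Sigma)=B$, and total geodesicity gives $\exp_o(B\cap T_o\Sigma)\subset\Sigma$, so this neighborhood lies in $H\cdot\Sigma$. A connectedness argument using completeness of $M$ then extends this to $M=H\cdot\Sigma$.

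For (2), set $\Omega=\{p\in\Sigma : T_p\Sigma\subset\nu_p(H\cdot p)\}$. By (a), $o\in\Omega$, and $\Omega$ is closed by continuity of the orbit tangent distribution on each orbit-type stratum. To show $\Omega$ is open near $o$, take $v\in T_o\Sigma$ small, set $p=\exp_o(v)\in\Sigma$, and for $X\in\g{h}$ let $W$ be parallel along $\gamma(t)=\exp_o(tv)$ with $W(0)\in T_o\Sigma$, so that $W(t)\in T_{\gamma(t)}\Sigma$ by total geodesicity. The function $f(t)=\langle X^*_{\gamma(t)}, W(t)\rangle$ satisfies $f(0)=0$ by (a) and $f'(0)=0$ by (c). Using the Killing identity $\nabla_{\dot\gamma}\nabla_{\dot\gamma}X^* = -R(X^*,\dot\gamma)\dot\gamma$ together with the decomposition of the resulting curvature term relative to $T\Sigma\oplus\nu\Sigma$ (available thanks to total geodesicity of $\Sigma$), I expect to obtain a homogeneous linear ODE system for the tangential components of $X^*$ and $\nabla_{\dot\gamma}X^*$ along $\gamma$, with zero initial data forcing $f\equiv 0$. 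Combined with (1), this yields polarity with section $\Sigma$.

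The \textbf{main obstacle} is this openness step: condition (c) provides only first-order information at a single point, yet we need to control every Killing field $X^*$ all along $\Sigma$. Because the ODE for $f$ is coupled to $X^*_{\gamma(t)}$ itself rather than just to $f$, closing the system requires treating the family $\{X^* : X\in\g{h}\}$ collectively and exploiting the interplay between the Killing property, the curvature tensor of $M$, and the totally geodesic inclusion $\Sigma\subset M$.
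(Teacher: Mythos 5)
The paper does not prove this proposition; it is recalled verbatim from \cite{DK11}, so there is no in-house argument to compare against and I am judging your sketch against the standard proof. Your necessity argument is fine. In the sufficiency direction there are two gaps, one of which you flag yourself.

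The step you call the ``main obstacle'' is not actually an obstacle, and the ingredient you already name --- the curvature term decomposing relative to $T\Sigma\oplus\nu\Sigma$ --- closes it. Along a geodesic $\gamma$ of $\Sigma$ through $o$, the restriction $J=X^*\circ\gamma$ of the Killing field is a Jacobi field, $J''+R(J,\dot\gamma)\dot\gamma=0$. Since $\Sigma$ is totally geodesic, the Codazzi equation gives $R(X,Y)Z\in T\Sigma$ for $X,Y,Z$ tangent to $\Sigma$, and because $R(\,\cdot\,,\dot\gamma)\dot\gamma$ is self-adjoint it therefore preserves the splitting $T_{\gamma(t)}\Sigma\oplus\nu_{\gamma(t)}\Sigma$; parallel transport along $\gamma$ preserves this splitting too. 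Hence the tangential part satisfies the \emph{decoupled} linear ODE $(J^{\top})''+R(J^{\top},\dot\gamma)\dot\gamma=0$, with $J^{\top}(0)=0$ by (a) and $(J^{\top})'(0)=0$ by (c), so $J^{\top}\equiv 0$ by uniqueness. There is no need to treat the family $\{X^*:X\in\g{h}\}$ collectively; you only need to replace the scalar $f$ by the vector-valued tangential component. This gives $X^*\perp T\Sigma$ along every geodesic of $\Sigma$ through $o$, and since on any open subset of $\Sigma$ where $X^*\perp T\Sigma$ holds identically conditions (a) and (c) reproduce themselves at every point (differentiate $\langle X^*,W\rangle=0$ using that $\nabla_vW\in T\Sigma$ for $v,W$ tangent to the totally geodesic $\Sigma$), the set of points of $\Sigma$ satisfying (a) and (c) is open, closed and nonempty, whence $T_p\Sigma\subset\nu_p(H\cdot p)$ for all $p\in\Sigma$.

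The second gap is your step (1): a tubular neighbourhood of the single orbit $H\cdot o$ contained in $H\cdot\Sigma$ plus ``a connectedness argument'' does not yield $M=H\cdot\Sigma$, since you would need $H\cdot\Sigma$ to be both open and closed in $M$ and neither is established. The standard route is direct: given $q\in M$, the orbit $H\cdot o$ is closed (a closed subgroup of $I(M)$ acts properly), so completeness of $M$ provides a minimizing geodesic from $q$ to $H\cdot o$, which meets $H\cdot o$ orthogonally; after translating by an element of $H$ it ends at $o$ with velocity in $\nu_o(H\cdot o)$, condition (b) supplies an element of $H_o$ rotating that velocity into the section $T_o\Sigma$ of the slice representation, and $\exp_o(T_o\Sigma)\subset\Sigma$ (geodesic completeness of $\Sigma$ is implicitly needed here) places $q$ in $H\cdot\Sigma$. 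With these two repairs your outline becomes a complete proof.
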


We will use a refinement of this result for symmetric spaces of noncompact type. Let $M = G/K$ be a Riemannian symmetric space of noncompact type, where $G = I^o(M)$ is the connected component of $I(M)$ containg the identity transformation of $M$ and $K$ is the isotropy subgroup of $G$ at $o \in M$. Let $\g{g}$ be the Lie algebra of $G$, $B$ the Killing form of $\g{g}$, and $\theta$ the Cartan involution of the Cartan decomposition $\g{g} = \g{k} \oplus \g{p}$. The inner product defined by $\langle X , Y \rangle = -B(X,\theta Y)$ for all $X,Y \in \g{g}$ is positive definite. We identify $T_oM$ with $\g{p}$ in the usual way.

\begin{corollary}\label{C:criterion}
Let $M=G/K$ be a Riemannian symmetric space of noncompact type, and let
$\Sigma$ be a connected totally geodesic submanifold of $M$
with $o \in \Sigma$. A connected closed subgroup $H$ of $I(M)$
acts polarly on $M$ with section $\Sigma$ if and only if
$T_o\Sigma\subset\nu_o(H\cdot o)$, $T_o\Sigma$ is a section of the
slice representation of $H_o$ on $\nu_o(H\cdot o)$, and
\[
\langle[v,w],X\rangle= -B([v,w],\theta X) = 0\text{ for all $v$, $w\in T_o\Sigma \subset \g{p}$ and all $X\in\g{h}$.}
\]
\end{corollary}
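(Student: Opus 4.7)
The plan is to apply Proposition \ref{P:criterion} directly. The hypotheses that $T_o\Sigma \subset \nu_o(H \cdot o)$ and that $T_o\Sigma$ is a section of the slice representation of $H_o$ on $\nu_o(H \cdot o)$ are exactly conditions \textup{(a)} and \textup{(b)} of that proposition, so the entire content of the corollary is to recast condition \textup{(c)}, namely $\nabla_v X^* \in \nu_o\Sigma$ for all $v \in T_o\Sigma$ and $X \in \g{h}$, as the algebraic identity $\langle [v,w], X\rangle = 0$ for all $v, w \in T_o\Sigma$ and $X \in \g{h}$. Throughout, $T_o\Sigma$ sits inside $T_oM \cong \g{p}$ because $\Sigma$ is totally geodesic through $o$.

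The technical core is the standard formula for the covariant derivative of a fundamental Killing vector field at the base point of a symmetric space: for $X = X_{\g{k}} + X_{\g{p}}$ with $X_{\g{k}}\in \g{k}$ and $X_{\g{p}}\in \g{p}$, and $v \in \g{p} \cong T_oM$,
\[
(\nabla_v X^*)_o = [X_{\g{k}}, v].
\]
I would justify this in two cases. When $X \in \g{k}$, one has $X^*_o = 0$, and the Lie-derivative identity $\mathcal{L}_{X^*}W|_o = -\nabla_{W_o} X^*$ valid at any zero of $X^*$ reduces the claim to the fact that the isotropy action of $\Exp(tX)$ on $T_oM$ coincides with $\Ad(\Exp(tX))|_{\g{p}}$, whose derivative at $t=0$ is $\ad(X)$. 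When $X \in \g{p}$, the group $\Exp(tX)$ consists of transvections along the geodesic $\gamma(t) = \Exp(tX)\cdot o$, whose differentials at $o$ act as parallel transport along $\gamma$; the variation $\Phi(s,t) = \Exp(tX) \cdot \gamma_v(s)$, where $\gamma_v$ is the geodesic from $o$ with initial velocity $v$, then yields $(\nabla_v X^*)_o = 0$ via the standard exchange of covariant derivatives. Summing the two contributions gives the displayed formula.

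With this formula in hand, the equivalence becomes a short computation using the $\ad$-invariance identity $\langle \ad(Y)Z, W\rangle = -\langle Z, \ad(\theta Y)W\rangle$ recalled in Section~\ref{S:preliminaries}. For $v, w \in T_o\Sigma \subset \g{p}$ we have $[v,w] \in \g{k}$ and therefore $\langle [v,w], X_{\g{p}}\rangle = 0$, so $\langle [v,w], X\rangle = \langle [v,w], X_{\g{k}}\rangle$. Combining this with $\theta v = -v$ yields
\[
\langle [v,w], X\rangle \;=\; -\langle w, \ad(\theta v) X_{\g{k}}\rangle \;=\; -\langle [X_{\g{k}}, v], w\rangle \;=\; -\langle (\nabla_v X^*)_o, w\rangle.
\]
Hence $(\nabla_v X^*)_o \perp T_o\Sigma$ for every $v \in T_o\Sigma$ and $X \in \g{h}$ if and only if $\langle [v,w], X\rangle = 0$ for every $v, w \in T_o\Sigma$ and $X \in \g{h}$, which is exactly condition \textup{(c)} of the proposition recast algebraically. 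The equivalent form $-B([v,w], \theta X) = 0$ is then immediate from the definition of $\langle\,\cdot\,,\,\cdot\,\rangle$ together with the $\theta$-invariance of the Killing form. The only real hazard in this argument is keeping signs straight in the covariant-derivative formula; once that is correctly set up, the remainder is bookkeeping.
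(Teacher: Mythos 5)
Your proposal is correct and follows essentially the same route as the paper: conditions (a) and (b) of Proposition~\ref{P:criterion} are the first two hypotheses verbatim, and condition (c) is converted into the bracket identity via the formula $(\nabla_v X^*)_o = [X_{\g{k}},v] = -[v,X]_{\g{p}}$ for Killing fields at the base point (the paper obtains it as $[v^*,X^*]_o=-[v,X]^*_o$ using the transvection property of $v^*$, citing Sakai, while you rederive it by splitting $X$ into its $\g{k}$- and $\g{p}$-parts) followed by the same $\ad$-invariance computation. The only detail the paper records that you omit is the remark that every totally geodesic submanifold of $G/K$ is automatically embedded, which is needed to invoke Proposition~\ref{P:criterion} as stated.
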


\begin{proof}
Every totally geodesic submanifold in $G/K$ is embedded. Conditions~(\ref{P:criterion:tangent})
and~(\ref{P:criterion:slice}) of Proposition~\ref{P:criterion} are
satisfied by hypothesis, so we only have to check
condition~(\ref{P:criterion:derivative}). Let $v\in T_o\Sigma$ and
$X\in\g{h}$. Then, $v$ can be considered as a vector in
$\g{p}\subset\g{g}$, and hence we have $\nabla_v
X^*=[v^*,X^*]_o=-[v,X]^*_o=-[v,X]_{\g{p}}$, where the subscript means
orthogonal projection onto $\g{p}$ (see for
example~\cite[\S~IV.6]{S96}). Therefore, $\nabla_v X^*\in
\nu_o\Sigma$ if and only if for each $w\in T_o\Sigma\subset\g{p}$ we
have $0=\langle \nabla_v X^*,w\rangle=-\langle
[v,X]_{\g{p}},w\rangle=\langle X,[v,w]\rangle$.
\end{proof}

Assume that $H$ is a connected closed subgroup of $SU(1,2)$ acting
polarly on $\C H^2$, and let $\Sigma$ be a section of the action of
$H$. Since $\Sigma$ is totally geodesic, it is congruent to a point,
a geodesic which we can view as a totally geodesic $\R H^1$, a
totally geodesic complex hyperbolic line $\C H^1$, a totally geodesic
real hyperbolic plane $\R H^2$, or the whole complex hyperbolic
plane $\C H^2$. Clearly, if $\Sigma$ is a point, then the action of $H$ is
transitive, and if $\Sigma$ is the entire space, then the action is
trivial. So the only possibilities left are $\R H^1$, $\R H^2$, and
$\C H^1$.

If $\Sigma = \R H^1$, then the action of $H$ is of
cohomogeneity one (and also hyperpolar). Cohomogeneity one actions on
complex hyperbolic spaces were classified in~\cite{BT06}. A more
geometric classification in terms of the constancy of the principal
curvatures of a real hypersurface in $\C H^2$ can be found
in~\cite{BD07}. This corresponds to item~(\ref{It:cohom1}) of the
Main Theorem.

Therefore, the only remaining possibility for $\Sigma$ is to be an $\R H^2$ or a $\C
H^1$, which both have dimension~$2$. Hence, from now on we assume that $H$
acts on $\C H^2$ with cohomogeneity~$2$.

If all the orbits of the action of $H$ have the same dimension, then
there are no exceptional orbits and $H$ induces a homogeneous polar
foliation of $\C H^2$~\cite{BDT11}. Homogeneous polar foliations of
complex hyperbolic spaces were classified by the authors
in~\cite{BD}. This corresponds to case~(\ref{It:cohom2:foliation}) of
the Main Theorem.

Thus we can assume from now on that the action of $H$ has a singular
orbit. For dimension reasons, this orbit can only be $0$-dimensional
or $1$-dimensional. Assume first that there is a $0$-dimensional
orbit, that is, there is a point $o\in\C H^2$ that is fixed by the
action of $H$. In this case the group $H$ has to be compact. Indeed,
let $\{h_n\}$ be a sequence contained in $H$. Since $H$ fixes $o$, we
have that $\{h_n(o)\}$ converges to $o$. Since the group is closed in
$SU(1,2)$, the action of $H$ is proper and hence, by definition of
proper action, $\{h_n\}$ has a convergent subsequence. This shows
that $H$ is compact. In any case, polar actions with a fixed point on
$\C H^2$ have been classified in~\cite[Proposition 12~(ii)]{DK11}.
There are exactly three possibilities up to orbit equivalence: the
trivial action, the isotropy action of $S(U(1)U(2))$ (which is of
cohomogeneity one), and the action of $S(U(1)U(1)U(1))\cong U(1)\cdot
U(1)$, which is of cohomogeneity two and corresponds to
case~(\ref{It:cohom2:point}) of the Main Theorem. It is worthwhile to
point out at this stage that polar actions with a fixed point in $\C
H^2$ correspond to polar actions in $\C P^1$. The only nontrivial and
nontransitive polar action on $\C P^1$ up to orbit equivalence is the
isotropy action of $U(1)\cong S(U(1)U(1))$, which has two fixed points
as singular orbits; the rest of the orbits are principal, and in
particular one of its orbits is a totally geodesic $\R P^1$ in $\C
P^1$. This action is orbit equivalent to the action of $SO(2)$ on $\C
P^1$.

Finally, let us assume that $H$ has a singular orbit of dimension $1$
and no fixed points. Let $\g{h}$ be the Lie algebra of $H$. Let
$\g{l}$ be a proper maximal subalgebra of $\g{su}(1,2)$ containing
$\g{h}$. It is known that $\g{l}$ is either reductive or parabolic
(see~\cite{M61} or~\cite[Theorem~3.2]{BT} for a more detailed proof).

Assume first that $\g{l}$ is reductive. Then, up to conjugation,
$\g{l}$ is $\g{s}(\g{u}(1,1) \oplus \g{u}(1))\cong\g{su}(1,1)$,
$\g{so}(1,2)$, or $\g{s}(\g{u}(1)\oplus\g{u}(2))\cong\g{u}(2)$. The
last possibility corresponds to a compact group and hence $H\subset
S(U(1)U(2))$ would have a fixed point by Cartan's fixed point
theorem, contradicting our assumption. Then
$\g{l}=\g{s}\g{u}(1,1)$ or $\g{l}=\g{so}(1,2)$. In
both cases $\g{l}$ has dimension~$3$, and the action of $L$, the
connected Lie subgroup of $SU(1,2)$ whose Lie algebra is $\g{l}$, is
of cohomogeneity one. Thus, $\dim\g{h}<3$. By the classification of
Lie algebras of low dimension, this implies that $\g{h}$ is solvable,
and hence it is contained in a Borel subalgebra $\g{b}$, that is, a
maximal solvable subalgebra of $\g{su}(1,2)$. There are, up to
conjugation, exactly two types of Borel subalgebras in $\g{su}(1,2)$:
of maximally compact type, and of maximally noncompact type. Again,
$\g{h}$ cannot be contained in a Borel subalgebra of maximally
compact type, because such a subalgebra $\g{b}$ is compact and hence
$H$ would have a fixed point by Cartan's fixed point theorem. Hence
$\g{h}$ is contained in a Borel subalgebra of maximally noncompact
type. Then, with respect to a suitable Cartan decomposition
$\g{su}(1,2)=\g{k}\oplus\g{p}$, and a suitable maximal abelian
subspace $\g{a}$ of $\g{p}$, we have
$\g{b}=\g{t}\oplus\g{a}\oplus\g{g}_\alpha\oplus\g{g}_{2\alpha}$. Here
$\g{t}\oplus\g{a}$ is a Cartan subalgebra of $\g{su}(1,2)$;
$\g{a}\subset\g{p}$ is called is vector part, and $\g{t}\subset\g{k}$
is called the toroidal part. It is easy to see in this case that
$\g{t}=\g{k}_0$. Hence,
$\g{b}=\g{k}_0\oplus\g{a}\oplus\g{g}_\alpha\oplus\g{g}_{2\alpha}$
turns out to be a parabolic subalgebra. Thus, we may assume from now
on that the maximal subalgebra $\g{l}$ containing $\g{h}$ is
parabolic. Write, as before, this parabolic subalgebra as
$\g{l}=\g{k}_0\oplus\g{a}\oplus\g{g}_\alpha\oplus\g{g}_{2\alpha}$.

Since a subgroup of $SU(1,2)$ whose Lie algebra is contained in
$\g{a}\oplus\g{g}_\alpha\oplus\g{g}_{2\alpha}$ induces a foliation on
$\C H^2$, we conclude that the orthogonal projection of $\g{h}$ onto
$\g{k}_0$ is nonzero. Moreover, we know that $\g{k}_0$ is
$1$-dimensional, and that the orbit of $H$ through the origin $o$ is
at most $2$-dimensional, which implies that the orthogonal projection
of $\g{h}$ onto $\g{a}\oplus\g{g}_\alpha\oplus\g{g}_{2\alpha}$ is at
most $2$-dimensional. Therefore, $\g{h}$ can be written as
$\g{h}=\g{k}_0\oplus\R\xi\oplus\R\eta$, where $\xi$,
$\eta\in\g{a}\oplus\g{g}_\alpha\oplus\g{g}_{2\alpha}$ are linearly
independent vectors, or $\g{h}=\R(T+\xi)\oplus\R\eta$, with
$T\in\g{k}_0$, $T\neq 0$, and $\xi$,
$\eta\in\g{a}\oplus\g{g}_\alpha\oplus\g{g}_{2\alpha}$. We analyze
both possibilities.

Assume first that $\g{h}=\g{k}_0\oplus\R\xi\oplus\R\eta$, where
$\xi$, $\eta\in\g{a}\oplus\g{g}_\alpha\oplus\g{g}_{2\alpha}$ are
linearly independent vectors. It follows from the properties of root
spaces, the fact that $\g{h}$ is a Lie algebra, and the skew-symmetry
of the elements of $\ad(\g{k}_0)$, that
$\ad(\g{k}_0)\xi\in\g{g}_\alpha\cap(\g{h}\ominus\R\xi)=\R\eta$,
$\ad(\g{k}_0)\eta\in\g{g}_\alpha\cap(\g{h}\ominus\R\eta)=\R\xi$.
Since $\langle\ad(T)\xi,\eta\rangle=-\langle\ad(T)\eta,\xi\rangle$
for each $T\in\g{k}_0$, $\ad(\g{k}_0)\xi$ and $\ad(\g{k}_0)\eta$ are
both zero, or both nonzero. If $\ad(\g{k}_0)\xi=\ad(\g{k}_0)\eta=0$,
we conclude that $\xi$, $\eta\in\g{a}\oplus\g{g}_{2\alpha}$, so
$\g{h}=\g{k}_0\oplus\g{a}\oplus\g{g}_{2\alpha}$. This is not possible
because the corresponding group $H$ would act with cohomogeneity one.
Let us assume then that $\ad(\g{k}_0)\xi$ and $\ad(\g{k}_0)\eta$ are
both nonzero. Hence, $\R\xi\oplus\R\eta\subset\g{g}_\alpha$, and
since they are linearly independent and $\g{g}_\alpha$ is
$2$-dimensional, it follows that $\g{h}=\g{k}_0\oplus\g{g}_\alpha$.
This is not possible because $\g{k}_0\oplus\g{g}_\alpha$ is not a Lie
algebra.

In order to deal with the second possibility we start first with

\begin{lemma}\label{L:dim2}
Assume $\g{h}=\R(T+\xi)\oplus\R\eta$, with $0 \neq T\in\g{k}_0$
and $\xi$, $\eta\in\g{a}\oplus\g{g}_\alpha\oplus\g{g}_{2\alpha}$.
Then $\g{h}$ can be written in one of the following forms:
\begin{enumerate}[{\rm(a)}]
\item $0 \neq \xi\in\g{g}_\alpha$ and $0 \neq \eta\in\g{g}_{2\alpha}$, or\label{L:dim2:a}

\item $\xi=0$ and $0 \neq \eta\in\g{a}\oplus\g{g}_{2\alpha}$, or\label{L:dim2:b}

\item $\xi=[T,Y] +Z$ and $\eta=2B+Y+dZ$, where $d \in \R$ and $0 \neq Y\in\g{g}_\alpha$ such that $[[T,Y],Y] = 2Z$.\label{L:dim2:c}
\end{enumerate}
\end{lemma}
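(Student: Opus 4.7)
The plan is to fix coordinates on the parabolic subalgebra, writing $\xi = aB+U+xZ$ and $\eta = bB+V+yZ$ with $U,V\in\g{g}_\alpha$ and $a,b,x,y\in\R$, and then to exploit two constraints: the closure of $\g{h}$ under the Lie bracket, and the polarity of $H$ which is standing throughout the surrounding discussion. Using the bracket formula of Section~\ref{S:preliminaries} together with $[T,B]=[T,Z]=0$, I would compute $[T+\xi,\eta]=[T,V]+[\xi,\eta]$; this element has vanishing $\g{k}_0$-component, so the requirement $[T+\xi,\eta]\in\g{h}$ forces $[T+\xi,\eta]=c_2\eta$ for some $c_2\in\R$. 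Splitting this identity into root-space components yields three scalar equations, the $\g{a}$-equation being $c_2 b=0$.

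The analysis then branches on $b$. If $b\neq 0$, then $c_2=0$; after rescaling $\eta$ so that $b=2$ and adding the multiple $-\tfrac{a}{2}\eta$ to $T+\xi$ to kill the coefficient $a$ of $B$ in $\xi$, the remaining equations collapse to $U=[T,V]$ and $x=\tfrac14\langle J[T,V],V\rangle$. The sub-case $V=0$ then forces $\xi=0$ and $\eta\in\g{a}\oplus\g{g}_{2\alpha}$, giving form~(b); for $V\neq 0$, the identity $[[T,V],V]=\tfrac12\langle J[T,V],V\rangle Z=2xZ$ shows $x\neq 0$, and rescaling $T+\xi$ by $1/x$ with $Y:=V$, $d:=y$ produces form~(c) together with the relation $[[T,Y],Y]=2Z$. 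If instead $b=0$, the $\g{g}_\alpha$-equation reads $[T,V]=(c_2-\tfrac{a}{2})V$; since $\g{k}_0\cong\g{u}(1)$ acts on $\g{g}_\alpha\cong\C$ by nontrivial rotation, $\ad(T)|_{\g{g}_\alpha}$ has no real eigenvalue, forcing $V=0$. Then $\eta=yZ$ with $y\neq 0$, and adding a multiple of $\eta$ to $T+\xi$ kills the $Z$-part of $\xi$, leaving $\xi=aB+U$; when $a=0$ this is form~(a) if $U\neq 0$ and form~(b) if $U=0$.

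The main obstacle is the residual sub-case $a\neq 0$ with $b=V=0$, which is not ruled out by Lie-algebra considerations alone since $\R(T+aB+U)\oplus\R Z$ is a genuine subalgebra. Here I plan to invoke Corollary~\ref{C:criterion}: the tangent to the orbit is $T_o(H\cdot o)=\R(aB+U_\g{p})\oplus\R Z_\g{p}$, where $U_\g{p}=\tfrac12(1-\theta)U$ is the $\g{p}$-projection; examining its orthogonal complement in $\g{p}$, one checks that the normal is neither $J$-invariant (when $U\neq 0$) nor totally real, so it cannot be tangent to any $2$-dimensional totally geodesic submanifold of $\C H^2$. In the exceptional case $U=0$, the normal equals $\g{p}_\alpha$ with candidate section a $\C H^1$, but using $[U_1,U_2]=\tfrac12\langle JU_1,U_2\rangle Z$ on $\g{g}_\alpha$ one computes $\langle[v,w],Z\rangle=\tfrac14\langle JU_1,U_2\rangle\neq 0$ for appropriate $v,w\in\g{p}_\alpha$, violating the third condition of Corollary~\ref{C:criterion}. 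Either way, $H$ fails to be polar, so $a=0$, and the classification is complete.
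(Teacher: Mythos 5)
Your proposal is correct and follows essentially the same route as the paper: both derive the three component equations from $[T+\xi,\eta]\in\R\eta$, use that $\ad(T)$ acts on $\g{g}_\alpha$ as a nonzero rotation generator, and invoke Corollary~\ref{C:criterion} to exclude the residual subalgebra $\R(T+aB+U)\oplus\g{g}_{2\alpha}$ with $a\neq 0$ (the paper's Case~1 with $\lambda\neq 0$). The only difference is organizational — you branch first on the $\g{a}$-coefficient of $\eta$ and normalize the $\g{a}$-part of $\xi$ away, while the paper branches on the $\g{g}_\alpha$-component of $\eta$ after assuming $\langle X,Y\rangle=0$ — and this does not change the substance of the argument.
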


\begin{proof}
Write $\xi=aB+X+bZ$, and $\eta=cB+Y+dZ$, with $a$, $b$, $c$,
$d\in\R$, and $X$, $Y\in\g{g}_\alpha$. We may assume that $\langle
X,Y\rangle=0$.

First of all, by the algebraic properties of root spaces,
$[T+\xi,\eta]\in(\g{g}_\alpha\oplus\g{g}_{2\alpha})\cap\g{h}\subset\R\eta$.
We can therefore write $\lambda\eta=[T+\xi,\eta]$ for some $\lambda \in \R$. Inserting
the above expressions for $\xi$ and $\eta$, and taking the components of the resulting expression  in $\g{a}$,
$\g{g}_\alpha$ and $\g{g}_{2\alpha}$ we get
\begin{align}
\lambda c &=0,\label{E:lc}\\[1ex]
\lambda Y &=\frac{a}{2}Y-\frac{c}{2}X+[T,Y],\label{E:XY}\\
\lambda d &= ad-bc+\frac{1}{2}\langle [X,Y],Z \rangle.\label{E:abcd}
\end{align}

We consider the two cases $Y = 0$ and $Y \neq 0$ separately.

Case 1: $Y=0$.

If $\lambda=0$, Equation~\eqref{E:abcd} with $Y=0$ says that
$ad-bc=0$, and hence the vectors $aB+bZ$ and $cB+dZ$ are linearly
dependent, so we can write $\g{h}=\R(T+X)\oplus\R(cB+dZ)$. Now,
from~\eqref{E:XY} we get $cX=0$. If $c=0$ then
$\g{h}=\R(T+X)\oplus\g{g}_{2\alpha}$ and we are in
case~(\ref{L:dim2:a}), whereas if $X=0$ we are in
case~(\ref{L:dim2:b}).

If $\lambda\neq 0$, we get $c=0$ from~\eqref{E:lc} and therefore we can write
$\g{h}=\R(T+aB+X)\oplus\g{g}_{2\alpha}$.
It is obvious that in this case the orbit $H \cdot o$
is $2$-dimensional. Hence, if $\Sigma$ is a section of the action with $o \in \Sigma$, we must have
$T_o\Sigma=\{v\in\g{p}:\langle
v,\xi\rangle=\langle v,\eta\rangle=0\}$.
For $X=0$ we have $T_o\Sigma=\g{p}_\alpha$, and Corollary~\ref{C:criterion} implies
$0=\langle Z,[(1-\theta)U,(1-\theta)JU]\rangle
=\langle Z,[U,JU]\rangle
=\lVert U\rVert^2$
for all $U \in \g{g}_\alpha$, which is impossible.
Therefore we must have $X\neq 0$, and then
$T_o\Sigma=\R((1-\theta)JX)\oplus\R(-\lVert
X\rVert^2B+a(1-\theta)X)$. Since $\Sigma$ is totally geodesic, $T_o\Sigma$
is either real or complex, and this can happen only if $a = 0$, which
implies case~(\ref{L:dim2:a}).

Case 2: $Y\neq 0$.

As $X$ and $Y$ are orthogonal and $[T,Y]$ is orthogonal to $Y$,
Equation~\eqref{E:XY} implies $\lambda = \frac{a}{2}$ and
$[T,Y]=\frac{c}{2}X$. Since the connected subgroup $K_0 \cong U(1)$ of
$S(U(1)U(2))$ with Lie algebra $\g{k}_0$ acts transitively on the
unit circle in $\g{g}_\alpha$, it follows that $[T,Y]\neq 0$ and
hence also $c\neq 0$ and $X\neq 0$. From~\eqref{E:lc} we get $\lambda=0$ (which implies that $\g{h}$
is abelian) and thus also $a = 0$,
and from~\eqref{E:abcd}
we then get $\langle [X,Y],Z\rangle=2bc$. Since $X,Y\neq 0$ and
$\dim\g{g}_\alpha=2$ we also get $b\neq 0$. Finally, since $b,c\neq 0$ we can renormalize $T$ and $Y$ so that $b=1$ and $c = 2$, thus getting~(\ref{L:dim2:c}).
\end{proof}

The next step is to show that the actions arising from
Lemma~\ref{L:dim2} are orbit equivalent to the actions described in
items~(\ref{It:cohom2:a})
or~(\ref{It:cohom2:g2a}) of the Main Theorem. We have three different
possibilities:

\subsubsection*{\textup{(\ref{L:dim2:a})} $\g{h}=\R(T+X)\oplus\g{g}_{2\alpha}$
with $0 \neq T \in \g{k}_0$ and $0 \neq X\in\g{g}_\alpha$} \hfill

Since $T \neq 0$ and $\ad(T)$ is skewsymmetric, we have $[T,[T,X]] = - \rho X$ for some $\rho > 0$.
We define $g=\Exp(-\frac{1}{\rho}[T,X]) \in G$. Then we get $\Ad(g)Z = Z$ and, since $[[T,X],T+X] = \rho X + [[T,X],X]$,
\[
\Ad(g)(T+X) = T + X - X - \frac{1}{\rho}[[T,X],X] + \frac{1}{2\rho}[[T,X],X] = T - \frac{1}{2\rho}[[T,X],X].
\]
Since $[[T,X],X] \in \g{g}_{2\alpha}$ this implies  $\Ad(g)(\g{h})=\g{k}_0\oplus\g{g}_{2\alpha}$. It follows that the action is conjugate to the one in~(\ref{It:cohom2:g2a}) of the Main Theorem.

\subsubsection*{\textup{(\ref{L:dim2:b})} $\g{h}=\g{k}_0\oplus\R(aB+bZ)$
with $a,b\in\R$, $a \neq 0$ or $b \neq 0$}\hfill

If $a=0$ we get $\g{h}=\g{k}_0\oplus\g{g}_{2\alpha}$, which is case~(\ref{It:cohom2:g2a}) of the Main
Theorem. Thus we can assume $a\neq 0$. In this case we define
$g=\Exp(\frac{b}{a}Z)$. Since $[\g{k}_0,\g{g}_{2\alpha}]=0$ we get
$\Ad(g)\g{k}_0=\g{k}_0$, and since $[B,Z] = Z$ we get $\Ad(g)(aB+bZ)=aB$. Altogether this implies
$\Ad(g)\g{h}=\g{k}_0\oplus\g{a} = \g{g}_0$, and therefore the action is conjugate to the one in~(\ref{It:cohom2:a})
of the Main Theorem.

\subsubsection*{\textup{(\ref{L:dim2:c})} $\g{h} = \R(T + [T,Y] +Z) \oplus \R(2B+Y+dZ)$ with $d \in \R$, $0 \neq T \in \g{k}_0$ and $0 \neq Y\in\g{g}_\alpha$ such that $[[T,Y],Y] = 2Z$}\hfill

We define $g=\Exp(Y+\frac{d}{2}Z)$. Then
\begin{align*}
\Ad(g)(T+[T,Y]+Z)
&=T+[T,Y]+Z+[Y,T] + [Y,[T,Y]] + \frac{1}{2}[Y,[Y,T]] = T,\\
\Ad(g)(B+Y+dZ)
&=2B+Y+dZ + 2[Y,B] + d[Z,B] = 2B,
\end{align*}
and therefore $\Ad(g)\g{h}=\g{k}_0\oplus\g{a} = \g{g}_0$. Consequently the action is conjugate to the one in~(\ref{It:cohom2:a})
of the Main Theorem.

\medskip
Altogether we have proved

\begin{proposition}\label{P:cohom2:1dimsingular}
Assume that $H$ acts polarly and without fixed points on $\C H^2$ with cohomogeneity $2$ and with a $1$-dimensional singular orbit. Then the Lie algebra of $H$ is conjugate to $\g{g}_0$ or $\g{k}_0\oplus\g{g}_{2\alpha}$.
\end{proposition}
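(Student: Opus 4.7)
The plan is to exhibit $\g{h}$ up to $\Ad(G)$-conjugation as one of finitely many explicit algebras sitting inside a parabolic subalgebra of $\g{s}\g{u}(1,2)$, and then, case by case, to conjugate each one into either $\g{g}_0$ or $\g{k}_0 \oplus \g{g}_{2\alpha}$.

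First I would locate $\g{h}$ inside a maximal proper subalgebra $\g{l}$ of $\g{s}\g{u}(1,2)$. By Mostow's theorem, $\g{l}$ is either reductive or parabolic. The reductive options $\g{u}(2)$, $\g{s}\g{u}(1,1)$ and $\g{s}\g{o}(1,2)$ can be eliminated: the first is compact, so Cartan's fixed point theorem contradicts the no-fixed-point hypothesis, while the other two are $3$-dimensional, so $L$ already acts with cohomogeneity one and $\dim \g{h} < 3$. By the classification of low-dimensional Lie algebras this makes $\g{h}$ solvable, hence contained in a Borel subalgebra; the maximally compact Borel is again ruled out by Cartan, leaving the maximally noncompact one, which coincides with the parabolic subalgebra $\g{l} = \g{k}_0 \oplus \g{a} \oplus \g{g}_\alpha \oplus \g{g}_{2\alpha}$.

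Next I would pin down the algebraic shape of $\g{h}$. Since any subalgebra of $\g{a} \oplus \g{g}_\alpha \oplus \g{g}_{2\alpha}$ induces a homogeneous foliation with no singular orbits, the orthogonal projection of $\g{h}$ onto $\g{k}_0$ must be nonzero. Combining this with $\dim \g{k}_0 = 1$ and $\dim \g{h} \le 2$ yields two options: $\g{h} = \g{k}_0 \oplus \R\xi \oplus \R\eta$, or $\g{h} = \R(T + \xi) \oplus \R\eta$ with $0 \neq T \in \g{k}_0$. I would discard the first option by exploiting the fact that $\ad(\g{k}_0)$ preserves $\g{h}$ and acts skew-symmetrically on the root space $\g{g}_\alpha$: either both $\xi$ and $\eta$ end up in $\g{a} \oplus \g{g}_{2\alpha}$, producing a cohomogeneity-one action, or $\R\xi \oplus \R\eta = \g{g}_\alpha$, in which case $\g{h} = \g{k}_0 \oplus \g{g}_\alpha$, which is not closed under the bracket.

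It therefore remains to analyze $\g{h} = \R(T + \xi) \oplus \R\eta$. Here I would invoke Lemma \ref{L:dim2} to reduce $\g{h}$ to one of the three explicit normal forms (a), (b), (c), and then for each one I would construct $g \in G$ of the form $\Exp(W)$ whose $\Ad(g)$ brings $\g{h}$ to the appropriate target. Natural choices are $W$ proportional to $[T,X]$ in case (a), $W$ proportional to $Z$ in case (b) (after disposing of the degenerate $a=0$ subcase, which already gives $\g{k}_0 \oplus \g{g}_{2\alpha}$), and $W = Y + \tfrac{d}{2}Z$ in case (c). The main obstacle is the computation in case (c), where the several second-order terms from the Baker--Campbell--Hausdorff expansion of $\Ad(\Exp(W))$ must cancel exactly against each other to kill both the $\g{g}_\alpha$ and $\g{g}_{2\alpha}$ components of the generators; the precise normalization $[[T,Y],Y] = 2Z$ guaranteed by the lemma, together with the bracket formula recorded in Section \ref{S:preliminaries}, is what makes this cancellation succeed.
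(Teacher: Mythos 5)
Your proposal follows the paper's own proof essentially step for step: the Mostow reduction to the parabolic maximal subalgebra $\g{k}_0\oplus\g{a}\oplus\g{g}_\alpha\oplus\g{g}_{2\alpha}$, the elimination of the case $\g{h}=\g{k}_0\oplus\R\xi\oplus\R\eta$ via the skew-symmetry of $\ad(\g{k}_0)$, and the same three conjugating elements $\Exp(-\tfrac{1}{\rho}[T,X])$, $\Exp(\tfrac{b}{a}Z)$ and $\Exp(Y+\tfrac{d}{2}Z)$ applied to the normal forms of Lemma~\ref{L:dim2}. The only quibble is a wording slip: the correct dimensional constraint is that the projection of $\g{h}$ onto $\g{a}\oplus\g{g}_\alpha\oplus\g{g}_{2\alpha}$ is at most $2$-dimensional, not $\dim\g{h}\le 2$, which would wrongly exclude the $3$-dimensional case $\g{k}_0\oplus\R\xi\oplus\R\eta$ that you then correctly go on to analyze.
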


In order to finish the proof of the Main Theorem if remains to show
that the actions of the groups whose Lie algebras are
$\g{g}_0$ or $\g{k}_0\oplus\g{g}_{2\alpha}$ are indeed polar. We
use the criterion given in Corollary~\ref{C:criterion}.

\subsubsection*{Case 1: $H$ is the connected Lie subgroup of $SU(1,2)$ whose
Lie algebra is $\g{h}=\g{g}_0$}\hfill

We consider the submanifold $\Sigma=\exp_o(\g{s})$ with
$\g{s}=(1-\theta)(\g{g}_\alpha^{\R} \oplus\g{g}_{2\alpha})$. Here, $\exp_o$
denotes the  exponential map $T_o\C H^2 \to \C H^2$, and we are identifying $T_o\C
H^2$ with $\g{p}$ as usual. It is clear that $\g{s}$ is a real subspace of $\g{p}$, and
hence $\Sigma$ is a totally geodesic real hyperbolic plane $\R H^2 \subset \C H^2$.

Obviously,
$T_o\Sigma=\g{s}\subset\g{p}_\alpha\oplus\g{p}_{2\alpha}=\nu_o(H\cdot
o)$. If $K_0 \cong U(1)$ denotes the connected Lie group of $SU(1,2)$ whose Lie
algebra is $\g{k}_0 \cong \g{u}(1)$, the slice representation of $H$ at $o$ is
the representation of $K_0$ on $\g{p}_\alpha\oplus\g{p}_{2\alpha}$,
which is equivalent to the sum of the standard representation of
$U(1)$ on $\g{p}_\alpha \cong \C$, and the trivial representation on $\g{p}_{2\alpha} \cong \R$. Thus $\g{s}$ is a
section of the slice representation. Since
$[\g{s},\g{s}]=(1+\theta)[\theta\g{g}_\alpha^{\R},\g{g}_{2\alpha}] \subset \g{g}_{-\alpha} \oplus \g{g}_\alpha$, which obviously is perpendicular to $\g{g}_0 = \g{h}$, it now follows from Corollary~\ref{C:criterion} that
the action of $H$ on $\C H^2$ is polar.

\subsubsection*{Case 2: $H$ is the connected Lie subgroup of $SU(1,2)$ whose
Lie algebra is $\g{h}=\g{k}_0\oplus\g{g}_{2\alpha}$}\hfill

In this case we consider $\Sigma=\exp_o(\g{s})$ with
$\g{s}=\g{a}\oplus (1-\theta)(\g{g}_\alpha^{\R})$. Again, $\g{s}$ is a real subspace of $\g{p}$ and $\Sigma$ is a totally geodesic
$\R H^2 \subset \C H^2$.
Moreover, we have
$T_o\Sigma=\g{s}\subset\g{a}\oplus\g{p}_\alpha=\nu_o(H\cdot o)$, and
the slice representation of $H$ at $o$ is the representation of
$K_0$ on $\g{a}\oplus\g{p}_\alpha$, which is equivalent to the sum of the standard representation of
$U(1)$ on $\g{p}_\alpha \cong \C$, and the trivial representation on $\g{a} \cong \R$. Therefore $\g{s}$ is a
section of the slice representation. Finally,
$[\g{s},\g{s}]=(1+\theta)\g{g}_\alpha^{\R} \subset \g{g}_{-\alpha} \oplus \g{g}_\alpha$ is orthogonal to $\g{h} = \g{k}_0 \oplus \g{g}_{2\alpha}$, and
thus it follows from Corollary~\ref{C:criterion} that the action of
$H$ on $\C H^2$ is polar.

\medskip
Altogether we have proved the Main Theorem.


\end{document}